\title{Stability of Calabi flow near an extremal metric}
\author{Hongnian Huang and Kai Zheng}
\date{}
\newtheorem{thm}{Theorem}[section]
\newtheorem{cor}[thm]{Corollary}
\newtheorem{lem}[thm]{Lemma}
\newtheorem{rmk}[thm]{Remark}
\theoremstyle{definition}
\theoremstyle{remark}
\newcommand{\thmref}[1]{Theorem~\ref{#1}}
\begin{document}
\maketitle
\renewcommand{\sectionmark}[1]{}

\begin{abstract}
We prove that on a K\"ahler manifold admitting an extremal metric $\omega$ and for any K\"ahler potential $\varphi_0$ close to $\omega$, the Calabi flow starting at $\varphi_0$ exists for all time and 
the modified Calabi flow starting at $\varphi_0$ will always be close to $\omega$.
Furthermore, when the initial data is invariant under the maximal compact subgroup of the identity component of the reduced automorphism group, the modified Calabi flow converges to an extremal metric near $\omega$ exponentially fast.
\end{abstract}
\section{Introduction}
Let $M$ be a K\"ahler manifold and $\Omega$ be the K\"ahler class in $H^2(M, R) \cap H^{1,1}(M,C)$. By the $\partial\bar\partial$-lemma, any K\"ahler metric $\omega_{\varphi}$ in $\Omega$
can be written as
\begin{align*}
\omega_\varphi=\omega+\sqrt{-1}\partial\bar\partial\varphi
\end{align*}
for some smooth real valued K\"ahler potential $\varphi$.
The space of K\"ahler metrics is defined by
\[
\mathcal{H}
=\{\varphi\in C^{\infty}(M,R)\vert\omega
+\sqrt{-1}\partial\bar\partial\varphi>0 \}.
\]
Donaldson \cite{MR1736211}, Mabuchi \cite{MR909015} and Semmes \cite{MR1165352} independently
defined a Weil-Peterson typed metric on $\mathcal{H}$,
under which it becomes a non-positively curved
infinite dimensional symmetric space.
Chen \cite{MR1863016}
proved that any two points in $\mathcal{H}$
can be connected by a $C^{1,1}$ geodesic
and that $\mathcal{H}$ is a metric space,
which verify two of the Donalson's conjectures.

In order to tackle the existence of constant scalar curvature K\"ahler metric (cscK)
problem, Calabi \cite{MR645743}\cite{MR780039} introduced a well-known functional
\[
Ca(\varphi)=\int_M S(\varphi)^2 \ \omega^n_\varphi,
\]
where $S(\varphi)$ is the scalar curvature of $\omega_\varphi$.
The critical point of this Calabi functional is called an extremal K\"ahler metric.
Calabi discovered that an extremal K\"ahler metric is a cscK
if and only if the Calabi-Futaki invariant is equal to zero.
Later, he suggested that one may study the gradient flow of the K-energy
to search for the cscK. This flow is defined as
\begin{align}\label{calabi flow}
\frac{\partial\varphi}{\partial t}=S-\underline{S}
\end{align} and it decreases the Calabi energy.
Since it is a fourth order equation, the maximal principle fails.
In \cite{MR2103718}, Donaldson proposed a programme
to study the convergence of the Calabi flow.
On a Riemannian surface, P. Chru\'{s}ciel \cite{MR1101689}
proved that the flow exists for all time and converges to a cscK metric by using the Bondi mass. Later Chen \cite{MR1820328}  and Struwe \cite{MR1991140} gave a different proof assuming the uniformization theorem. In Chen-Zhu \cite{chenzhu}, they removed the assumption of the uniformization theorem.
For higher dimensions, the Calabi flow has been studied in
Chen-He \cite{MR2405167} \cite{chen-2007} \cite{chen-2008} and Tosatti-Weinkove \cite{MR2357473}.
In Chen-He \cite{MR2405167}, they proved that the Calabi flow can
start from a $C^{3,\alpha}$ K\"ahler potential
and become smooth immediately as $t>0$.
One defines the little H\"older space $c^{k,\alpha}$ to be the closure of smooth functions in the usual H\"older norm $C^{k,\alpha}$.

\begin{thm}\label{short time: stability}
If $\omega_{\varphi_0}=\omega+\sqrt{-1}\partial\bar\partial\varphi_0$
satisfies $|\varphi_0|_{c^{3,\alpha}(M,g)}\leq K$, and $\lambda \omega < \omega_0 =\omega_{\varphi_0} < \Lambda \omega$, where $K, \lambda, \Lambda$ are positive constants, then the Calabi flow initiating from $\varphi_0$ admits a unique solution
$$\varphi(t)\in C([0,T],c^{3,\alpha}(M,g))\cap C((0,T],c^{4,\alpha}(M,g))$$
for small $T=T(\lambda,\Lambda,K,\omega)$. More specifically, for any $t \in (0, T]$, there is a constant $C=C(\lambda,\Lambda,K,\omega)$ such that 
$$
t^{1/4} (|\dot{\varphi}(t)|_{c^{0,\alpha}}(M) + |\varphi(t)|_{c^{4,\alpha}(M)}) \leq C, |\varphi(t)|_{c^{3,\alpha}(M)} \leq C.
$$
\end{thm}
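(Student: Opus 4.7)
The plan is to treat the Calabi flow as an abstract quasi-linear parabolic equation of fourth order and apply Da Prato--Grisvard style maximal regularity theory in continuous interpolation (little Hölder) spaces, following the framework adapted to the Calabi flow by Chen--He.

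First I would rewrite the flow as $\partial_t \varphi = S(\varphi) - \underline{S}$ and linearize the right-hand side at $\varphi_0$. Since $S$ depends on $\varphi$ through up to four derivatives and the principal symbol is that of $-\tfrac{1}{2}\Delta_{\omega_{\varphi_0}}^2$ (the leading part of the Lichnerowicz operator), the linearized operator $L_0$ is a fourth-order uniformly elliptic operator with coefficients controlled by $|\varphi_0|_{c^{3,\alpha}}$ and the bounds $\lambda \omega < \omega_0 < \Lambda \omega$. One then splits
\[
\partial_t \varphi + L_0 \varphi = L_0 \varphi_0 + \bigl(S(\varphi) - \underline{S} - DS|_{\varphi_0}(\varphi - \varphi_0)\bigr) =: F(\varphi),
\]
so that the problem becomes a semilinear perturbation of the linear parabolic equation $\partial_t u + L_0 u = f$ with initial datum $\varphi_0 \in c^{3,\alpha}$.

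Next I would establish that $-L_0$ generates an analytic semigroup on $c^{0,\alpha}(M,g)$ with domain $c^{4,\alpha}(M,g)$, which follows from the uniform ellipticity of $L_0$ and classical Schauder theory applied to resolvent equations. The continuous interpolation space of exponent $3/4$ between $c^{0,\alpha}$ and $c^{4,\alpha}$ coincides (up to equivalence of norms) with $c^{3,\alpha}$; this identification is the mechanism by which the initial regularity fits naturally into Da Prato--Grisvard's theory. Maximal regularity then yields a unique solution $u$ of the linear Cauchy problem in the class $C([0,T], c^{3,\alpha}) \cap C((0,T], c^{4,\alpha})$ with $\dot u \in C((0,T], c^{0,\alpha})$, together with the smoothing estimate
\[
t^{1/4}\bigl(|\dot u(t)|_{c^{0,\alpha}} + |u(t)|_{c^{4,\alpha}}\bigr) + |u(t)|_{c^{3,\alpha}} \le C\bigl(|u_0|_{c^{3,\alpha}} + \|f\|_{C([0,T],c^{0,\alpha})}\bigr),
\]
where the singular factor $t^{1/4}$ reflects the one-derivative gap (in a fourth-order equation) between the datum and the parabolic regularity.

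With the linear theory in hand, I would set up a contraction mapping on the Banach space
\[
X_T := \{\varphi : \|\varphi\|_{X_T} := \sup_{t\in[0,T]} |\varphi(t)|_{c^{3,\alpha}} + \sup_{t\in(0,T]} t^{1/4}(|\dot\varphi(t)|_{c^{0,\alpha}} + |\varphi(t)|_{c^{4,\alpha}}) < \infty\}
\]
by sending $\varphi$ to the solution of $\partial_t \psi + L_0 \psi = F(\varphi)$ with $\psi(0)=\varphi_0$. On a suitable closed ball in $X_T$ (whose radius depends on $K,\lambda,\Lambda,\omega$), the nonlinearity $F$ is Lipschitz in $c^{0,\alpha}$ with constant tending to $0$ as the ball shrinks and as $T \to 0$, because $F(\varphi)$ vanishes to second order at $\varphi=\varphi_0$ and its leading behavior involves only $c^{4,\alpha}$ norms multiplied by differences that carry the $t^{1/4}$ weight favorably. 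Shrinking $T=T(\lambda,\Lambda,K,\omega)$ produces a contraction and hence the desired solution with the stated estimates.

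The main technical obstacle is the identification of the continuous interpolation space and the careful bookkeeping of the $t^{1/4}$ weights in the Lipschitz estimate for $F$: one has to verify that all nonlinear error terms, which can involve products such as (second derivative of $\varphi - \varphi_0$) $\times$ (fourth derivative of $\varphi$), are dominated in $c^{0,\alpha}$ by the norm of $X_T$ times a quantity that vanishes as $T \to 0$, so that contractivity and uniqueness both follow. Once this is done, openness of the maximal existence interval and the explicit time-of-existence dependence on $(\lambda,\Lambda,K,\omega)$ follow from inspection of the constants.
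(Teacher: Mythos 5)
The paper itself contains no proof of this theorem---it is quoted verbatim from Chen--He \cite{MR2405167}---and your proposal reproduces essentially their argument: Da Prato--Grisvard maximal regularity for the quasilinear fourth-order flow in continuous interpolation (little H\"older) spaces, using the identification $c^{3,\alpha}=(c^{0,\alpha},c^{4,\alpha})_{3/4}$ to accommodate the $c^{3,\alpha}$ datum and to produce the $t^{1/4}$ smoothing weight. One caution on your write-up: since $\varphi_0$ has only three derivatives, $S(\varphi_0)$ and hence the full Fr\'echet derivative $DS|_{\varphi_0}$ (and the term $L_0\varphi_0$ in your displayed decomposition) do not make classical sense, so you should freeze only the coefficients of the principal quasilinear part $-g_{\varphi}^{i\bar j}g_{\varphi}^{k\bar l}\varphi_{i\bar j k\bar l}$, which depend merely on $\nabla^2\varphi_0\in c^{1,\alpha}$, and run the fixed-point argument in the mild (variation-of-constants) formulation, exactly as Chen--He do.
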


\begin{rmk}
He \cite{he-2009} shows that the Calabi flow can start from $\omega_\varphi$ where $\varphi \in c^{2,\alpha}(M)$.
\end{rmk}

\begin{thm}\label{regularity of PCF: reg of PCF}
The solution obtained above belongs to
$C^0([0,T],c^{3,\alpha}(M))\cap C^0((0,T],C^{\infty}(M))$.
\end{thm}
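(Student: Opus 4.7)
The continuity $\varphi\in C^0([0,T],c^{3,\alpha}(M))$ is already part of the conclusion of \thmref{short time: stability}, so the task reduces to upgrading the spatial regularity from $c^{4,\alpha}$ to $C^{\infty}$ for $t>0$. My approach would be a standard parabolic bootstrap, treating the Calabi flow as a quasilinear fourth-order parabolic PDE and iterating interior Schauder estimates on shrinking time intervals.

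Fix $t_0\in(0,T)$ and work on $[t_0,T]$. By \thmref{short time: stability} we have $|\varphi(t)|_{c^{4,\alpha}(M)}\leq C$ and $\omega_\varphi$ uniformly equivalent to $\omega$ on this interval, so the coefficients of the quasilinear equation $\partial_t\varphi=S(\omega_\varphi)-\underline{S}$ lie in $c^{0,\alpha}$ with uniform bounds and the equation is uniformly fourth-order parabolic. Differentiating once in $t$, the function $\dot\varphi$ satisfies a linear equation $\partial_t\dot\varphi=L_\varphi\dot\varphi$, where $L_\varphi$ is (minus) the Lichnerowicz operator of $\omega_\varphi$ up to lower order terms, again with coefficients controlled in $c^{0,\alpha}$ by the $c^{4,\alpha}$-bound on $\varphi$. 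Interior parabolic Schauder estimates for fourth-order equations (in the little H\"older setting, as in Lunardi's analytic-semigroup framework) then promote $\dot\varphi$ to $c^{4,\alpha}$ on any slightly shrunken subinterval, and the identity $\dot\varphi=S(\omega_\varphi)-\underline{S}$ algebraically recovers $\varphi\in c^{8,\alpha}$ there.

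Iterating---differentiating further in $t$, applying Schauder to the linear equation satisfied by $\partial_t^k\varphi$, and then inverting the flow equation for additional spatial regularity at each step---gives $\partial_t^k\varphi\in c^{4,\alpha}$ for every $k$, hence $\varphi(t)\in C^{\infty}(M)$ for every $t>t_0$. Letting $t_0\to 0$ proves the claim. The main technical point, rather than the iteration itself, is running parabolic Schauder in the little H\"older spaces $c^{k,\alpha}$ instead of the usual $C^{k,\alpha}$: the reason this is the natural setting is precisely that the linearized Calabi operator generates a strongly continuous analytic semigroup on $c^{0,\alpha}$, which is what legitimizes the time-differentiation step. The required sectoriality and resolvent bounds reduce to elliptic Schauder estimates for the Lichnerowicz operator of $\omega_\varphi$, whose constants are uniform thanks to the $c^{4,\alpha}$-bound on $\varphi$ and the two-sided metric bound $\lambda\omega<\omega_\varphi<\Lambda\omega$.
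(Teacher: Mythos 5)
You should know at the outset that the paper contains no proof of this theorem to compare against: Theorems~\ref{short time: stability} and~\ref{regularity of PCF: reg of PCF} are imported from Chen--He \cite{MR2405167} (``the Calabi flow can start from a $C^{3,\alpha}$ K\"ahler potential and become smooth immediately as $t>0$''), so your argument can only be measured against the standard one in that reference --- which, in outline, it matches. Two steps are softer than you present them. First, the time-differentiation step is circular as written: to assert that $\dot\varphi$ classically solves $\partial_t\dot\varphi=L_\varphi\dot\varphi$ you need $\dot\varphi(t)\in c^{4,\alpha}$, i.e.\ $\dot\varphi(t)$ in the domain of $L_\varphi$, which is precisely what you are trying to establish; Theorem~\ref{short time: stability} only gives $\dot\varphi(t)\in c^{0,\alpha}$. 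The standard repairs are either to run the Schauder estimate on the difference quotients $h^{-1}(\varphi(t+h)-\varphi(t))$, which do solve a linear fourth-order parabolic equation with $c^{0,\alpha}$ coefficients, or --- closer to what Chen--He actually do --- to avoid time-differentiation entirely: restart the flow at $t_0>0$ from the $c^{4,\alpha}$ datum $\varphi(t_0)$, observe that the short-time existence argument works verbatim one rung up the little-H\"older scale (with $X=c^{1,\alpha}$, $D(A)=c^{5,\alpha}$ and initial data in the interpolation space $c^{4,\alpha}$), and invoke uniqueness to identify the two solutions; iterating gains one derivative on each slightly shrunken time interval. Second, the phrase ``algebraically recovers $\varphi\in c^{8,\alpha}$'' hides an elliptic bootstrap: from $S(\varphi)=\dot\varphi+\underline S$ one gains spatial regularity by splitting the fourth-order equation into the second-order pair $\Delta_\varphi F=\mathrm{tr}_{\omega_\varphi}\mathrm{Ric}(\omega)-S(\varphi)$ and $\omega_\varphi^n=e^F\omega^n$ with $F=\log(\omega_\varphi^n/\omega^n)$, and this gains derivatives one at a time, not four at once. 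Neither issue is fatal --- your plan is the right one and the sectoriality remarks at the end are exactly the correct justification for the semigroup framework --- but these two points are where the actual work of the proof lives.
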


Chen and He then use an energy argument to show that
when K\"ahler manifolds admits a cscK $\omega$
and the initial K\"ahler potential is $C^{3,\alpha}$ small,
the Calabi flow converges exponentially fast to a cscK nearby.
In He \cite{he-2009}, he improved this result
for $C^{2,\alpha}$ small initial K\"ahler potentials.

In this short note,
we prove a parallel theorem for extremal K\"ahler metrics
using a different method
from Chen-Ding-Zheng \cite{zheng-2009}.
In that paper, they defined a flow called the pseudo-Calabi flow.
They proved the short time existence from $c^{2,\alpha}$
initial K\"ahler potentials, the long time existence under uniform Ricci bounds
and the stability near a cscK.
Since the linearized operator of the pseudo-Calabi flow
is not self-adjoint, they set up a unified frame
to tackle the stability problem of the K\"ahler Ricci flow
(c.f. Chen-Zheng \cite{zheng-2009krf}),
the Calabi flow and the pseudo-Calabi flow.
This method strongly relied on
the geometric structure of the space of K\"ahler metrics
.

First, we will prove the long time existence of the Calabi flow.
\begin{thm}\label{long}
On K\"ahler manifolds admitting an extremal metric $\omega$ and for any positive constant $\mathcal{K}, \lambda$, there is a
small constant $\epsilon$ depending on $\omega, \mathcal{K}, \lambda$, such that for any K\"ahler potential $\varphi_0$, 
if 
$$
|\varphi_0|_{C^{2,\alpha}(M)} < \mathcal{K}, \quad \lambda \omega < \omega_{\varphi_0}, \quad \int_M |\varphi_0|^2 \omega^n < \epsilon, 
$$
then the Calabi flow exists for all time.

\end{thm}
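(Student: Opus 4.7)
The strategy is a continuity argument built on the short-time existence of Theorem \ref{short time: stability} (as extended to $C^{2,\alpha}$ data by He in the remark). Let $T^*$ denote the supremum of times $T$ for which the Calabi flow from $\varphi_0$ exists on $[0,T)$ with a priori bounds
\begin{align*}
|\varphi(t)|_{C^{2,\alpha}(M)} \le 2\mathcal{K}, \qquad \tfrac{\lambda}{2}\omega \le \omega_{\varphi(t)} \le \Lambda\omega,
\end{align*}
for some $\Lambda$ depending only on $\mathcal{K}$ and $\omega$. The aim is to show $T^*=\infty$: by applying Theorem \ref{short time: stability} at a time just before $T^*$, it suffices to prove these inequalities remain strict on $[0, T^*)$ with a uniform gap whenever $\epsilon$ is sufficiently small. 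I would work throughout with the modified Calabi flow, for which $\omega$ is stationary; on any finite time interval the Calabi and modified flows differ by a smooth automorphism uniformly bounded in terms of the interval length, so long-time existence transfers between the two.

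The core of the argument is propagation of $L^2$-smallness of $\varphi(t)$. Along the modified flow the modified Calabi functional $\widetilde{Ca}$ is monotone non-increasing; when expanded at the extremal metric $\omega$, its second variation is controlled by (a power of) the modified Lichnerowicz operator $\mathcal{L}$, a non-negative self-adjoint elliptic operator whose finite-dimensional kernel $\mathfrak{h}$ is the Lie algebra of holomorphy potentials of the reduced automorphism group. Splitting $\varphi(t) = \pi_{\mathfrak{h}}\varphi(t) + \varphi^{\perp}(t)$, monotonicity together with the initial bound $\widetilde{Ca}(\varphi_0) - \widetilde{Ca}(\omega) = O(\epsilon)$ (justified via smoothing of Theorem \ref{regularity of PCF: reg of PCF} and the $t^{1/4}$-weighted estimates of Theorem \ref{short time: stability}, combined with the $C^{2,\alpha}$ bound on $\varphi_0$ and the $L^2$-smallness $\|\varphi_0\|_{L^2}^2 < \epsilon$) yields uniform smallness of $\varphi^{\perp}(t)$ in a suitable Sobolev norm on $[0,T^*)$. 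The finite-dimensional component $\pi_{\mathfrak{h}}\varphi(t)$ is governed by an ODE on $\mathfrak{h}$ driven by nonlinear terms of size $\|\varphi(t)\|^2$ (since $\pi_{\mathfrak{h}}\mathcal{L} = 0$); exploiting the equivariance of the modified flow under the maximal compact subgroup of the reduced automorphism group, this ODE admits nearly-conserved quantities (coming from momentum maps for the compact action) that prevent secular growth, so $\|\pi_{\mathfrak{h}}\varphi(t)\|_{L^2}$ remains $O(\sqrt{\epsilon})$ on $[0,T^*)$ uniformly in $T^*$.

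Once uniform $L^2$-smallness of $\varphi(t)$ is established, interpolation against the a priori $C^{2,\alpha}$ bound produces $C^0$-smallness of $\varphi(t)$, which together with parabolic Schauder estimates applied to $\partial_t\varphi + \mathcal{L}\varphi = N(\varphi)$ (with $N$ at least quadratic in the Hessian of $\varphi$ and its first derivatives) strictly improves both the metric equivalence and the $C^{2,\alpha}$ bound, closing the continuity argument. The main obstacle I foresee is controlling $\pi_{\mathfrak{h}}\varphi(t)$: the monotone functional $\widetilde{Ca}$ gives no control whatsoever in the kernel direction, so smallness of $\pi_{\mathfrak{h}}\varphi$ must come entirely from the initial $L^2$-hypothesis together with the absence of drift on $\mathfrak{h}$, and extracting this absence of drift from the symmetry structure of the modified flow seems to be the heart of the matter. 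A secondary technical point is that, since $\varphi_0$ is only $C^{2,\alpha}$, the value $\widetilde{Ca}(\varphi_0)$ and its Taylor expansion must be interpreted as $\lim_{t\to 0^+}\widetilde{Ca}(\varphi(t))$, which requires the instantaneous smoothing of Theorem \ref{regularity of PCF: reg of PCF} together with the weighted bounds of Theorem \ref{short time: stability}.
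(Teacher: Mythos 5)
Your strategy (a continuity argument driven by monotonicity of $\widetilde{Ca}$ plus a spectral splitting at the extremal metric) is genuinely different from the paper's, and it has a real gap exactly at the point you flag as ``the heart of the matter'': the control of the kernel component $\pi_{\mathfrak{h}}\psi(t)$. The vanishing of the modified Futaki invariant \eqref{futaki} says only that $\dot\psi$ is $L^2(\omega_{\psi(t)}^n)$-orthogonal to the holomorphy potentials $\theta_{Y}(\psi(t))$ of the \emph{moving} metric; projecting onto the fixed kernel $\mathfrak{h}$ at $\omega$ therefore leaves an error of size $O(\|\psi\|\,\|\dot\psi\|)$, and the only a priori bound on $\int_0^T\|\dot\psi\|_{L^2}\,dt$ that monotonicity of $\widetilde{Ca}$ provides is $\sqrt{T}\,\widetilde{Ca}(\psi_0)^{1/2}$, which grows with $T$. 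The natural ``momentum map'' quantities $\int_M\theta_Y(\psi)\,\omega_\psi^n$ are conserved along \emph{any} path of K\"ahler potentials (they are just the normalization of holomorphy potentials), so they carry no information about the flow and cannot rule out secular drift. Without an integrable-in-time bound on $\|\dot\psi\|_{L^2}$ --- which would follow from exponential decay, which needs the spectral gap, which needs the flow to already be close to $\omega$ --- the ODE on $\mathfrak{h}$ is not closed and the continuity argument is circular at this step. (There are secondary delicacies as well: smallness of $\widetilde{Ca}$ after the smoothing time controls $\|\mathcal{L}\psi\|_{L^2}$ only up to quadratic errors measured in $C^{4,\alpha}$, a norm in which your bootstrap hypotheses give boundedness but not smallness.)

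The paper avoids all of this with a soft but decisive tool you did not invoke: the geodesic distance on $\mathcal{H}$ is non-increasing between any two solutions of the Calabi flow \cite{MR1969662}, and $\rho(t)$, the potential of $\sigma(t)^{*}\omega$, is itself a solution. Combined with \lemref{DE}, this gives $d(\rho(t),\varphi(t))\le d(0,\varphi_0)\le C\sqrt{\epsilon}$ for as long as the flow exists --- a monotone quantity that controls drift in \emph{all} directions, kernel included, with no error terms to integrate. The actual proof is then a compactness argument by contradiction: take a sequence of initial data with $\int_M|\varphi_s^0|^2\omega^n\to 0$, let $T_s$ be the first time at which either the $c^{2,\alpha}$-closeness to $\rho(T_s)$ or the metric equivalence degenerates, use the smoothing estimates of Theorems \ref{short time: stability} and \ref{regularity of PCF: reg of PCF} to extract a $C^{k,\alpha}$-limit $\phi_\infty$ with $|\phi_\infty|_{C^{2,\alpha}(\omega)}=2C$, and contradict $d(0,\phi_\infty)=0$ using that $\mathcal{H}$ is a genuine metric space. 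If you want to rescue an energy-based proof you must supply a substitute for the distance-decreasing property in the kernel directions; as written, your proposal does not provide one.
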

Next we want to study the modified Calabi flow. Let $K$ be a maximal compact subgroup in the reduced automorphism group. Denote the corresponding Lie algebra of $K$ by $h_0(M)$, which is the ideal of holomorphic vector fields with zeros.
For any holomorphic vector field $\tilde{Y} \in h_0(M)$, denote $\tilde{Y}=Y-\sqrt{-1}JY$. Then there is a real function $\theta_Y$ such that
$$
L_{\tilde{Y}}\omega =L_{Y}\omega
=\sqrt{-1}\partial\bar\partial\theta_{Y}(t)
$$
and
$$
\int_M \theta_Y \omega^n = 0.
$$

For an arbitrary metric 
$$
\omega_{\varphi} = \omega + \sqrt{-1} \partial \bar{\partial} \varphi ,
$$
the corresponding $\theta_{\tilde{Y}}(\varphi)$ is 
$$
\theta_{\tilde{Y}}(\varphi)=\theta_Y + \tilde{Y}(\varphi).
$$

Following Futaki-Mabuchi \cite{MR1314584}, suppose $\tilde{X}, \tilde{Y} \in h_0(M)$, then the bilinear form
$$B(\tilde{X},\tilde{Y})=\int_M\theta_{\tilde{X}(\varphi)}\theta_{\tilde{Y}(\varphi)}\omega_{\varphi}^n$$
 is independent of the choice
of $\omega_{\varphi}$ in the K\"ahler class $\Omega=[\omega]$.

Let $\varphi(t)$ be a one parameter of K\"ahler potentials satisfying the Calabi flow equation and let $\sigma(t)$ be the holomorphic group
generated by $X$, the real part of the extremal vector field $\tilde{X}$.
Then $\sigma^{\ast}(t)\omega
=\omega+i\partial\bar\partial\rho(t)$ satisfies the Calabi flow equation since
$$\frac{\partial\sigma^\ast(t)\omega}{\partial t}
=L_X\omega(t)
=i\partial\bar\partial (S(t)-\underline{S}).
$$
Hence we can choose $\rho(t)$ to be a parameter of K\"ahler potentials satisfying the Calabi flow equation starting from $0$.

Let $\psi(t)=\sigma(-t)^\ast(\varphi(t)-\rho(t))$. Notice that, by definition,
$X=\sigma^{-1}_\ast(\frac{\partial}{\partial t}\sigma)$.
So we obtain the modified Calabi flow,
\begin{align*}\label{modified calabi flow}
\frac{\partial \psi}{\partial t}
&=-X(\psi(t))+\sigma(-t)^\ast \left(\frac{\partial\varphi(t)}{\partial t}
-\frac{\partial\rho(t)}{\partial t} \right)\\
&=-X(\psi(t))+\sigma(-t)^\ast(S(\varphi(t))-\underline S)
- (S(\omega)-\underline S)\\
&=S_\psi-\underline S-\theta_X-X(\psi)\\
&=S_\psi-\underline S-\theta_X(\psi)
\end{align*}

\begin{thm}\label{exp}
On K\"ahler manifolds admitting an extremal metric $\omega$,
for any $\mathcal{K}$-invariant K\"ahler potential $\varphi_0$ close to $\omega$ (in the sense of Theorem (\ref{long})),
the modified Calabi flow exponentially converges to
a nearby extremal metric.
\end{thm}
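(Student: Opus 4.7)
The plan is to combine the long-time existence from \thmref{long} with a Lyapunov argument based on a spectral gap for the Lichnerowicz operator on $\mathcal{K}$-invariant functions. First I would verify that $\mathcal{K}$-invariance is preserved along the modified Calabi flow: by Calabi's structure theorem the extremal vector field $\tilde X$ lies in the center of the reductive part of the reduced automorphism algebra, hence the one-parameter group $\sigma(t)$ commutes with the $\mathcal{K}$-action, and the defining relation $\psi(t)=\sigma(-t)^{*}(\varphi(t)-\rho(t))$ transfers $\mathcal{K}$-invariance from $\varphi_0$ to $\psi(t)$. Applying \thmref{long} to the underlying Calabi flow $\varphi(t)$ gives long-time existence for $\psi(t)$ together with uniform $C^{2,\alpha}$ control of $\omega_{\psi(t)}$, upgraded to interior smooth bounds by \thmref{regularity of PCF: reg of PCF}.

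Next I would introduce the modified Calabi energy
$$
E(\psi)=\int_M\bigl(S_\psi-\underline{S}-\theta_X(\psi)\bigr)^2\omega_\psi^n,
$$
whose critical points near $\omega$ are exactly the $\mathcal{K}$-invariant extremal metrics in $[\omega]$ with extremal vector field $\tilde X$. A direct computation along the modified Calabi flow yields the Lyapunov identity
$$
\frac{d}{dt}E(\psi)=-2\int_M\bigl|\mathcal D_\psi\dot\psi\bigr|^2\omega_\psi^n,
$$
where $\mathcal D=\bar\partial\nabla^{1,0}$.

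The heart of the proof is a spectral gap for $\mathfrak L:=\mathcal D_\omega^{*}\mathcal D_\omega$ on $\mathcal{K}$-invariant functions modulo constants: its $L^2$-kernel is the finite-dimensional space of $\mathcal{K}$-invariant holomorphy potentials (tangent to the manifold of nearby $\mathcal{K}$-invariant extremal metrics), and on the orthogonal complement $\mathfrak L$ has a uniform positive lower bound $\mu$. The bounded geometry from \thmref{long} lets this gap persist, with a slightly smaller constant, for $\mathcal D_\psi^{*}\mathcal D_\psi$ along the flow. Projecting out the kernel and combining with the Lyapunov identity yields $\tfrac{d}{dt}E(\psi)\leq -c\,E(\psi)$, hence exponential decay of $E$. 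A parabolic bootstrap promotes this to exponential decay of $\dot\psi=S_\psi-\underline{S}-\theta_X(\psi)$ in every $C^{k}$-norm, and integrating in $t$ shows that $\psi(t)\to\psi_\infty$ exponentially fast, with $\omega+\sqrt{-1}\partial\bar\partial\psi_\infty$ extremal.

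The main obstacle is the nontriviality of $\ker\mathfrak L$: the flow does not in general converge to $\omega$ itself, so one must carefully decompose $\psi(t)$ into its projection onto $\ker\mathfrak L$, which drifts within the finite-dimensional moduli of nearby $\mathcal{K}$-invariant extremal metrics, and its orthogonal component, which alone carries the exponentially decaying error. This gauge-fixing-type decomposition must be compatible with the flow at every time slice, which is where the $\mathcal{K}$-invariance hypothesis is genuinely used: without it $\ker\mathfrak L$ would include directions generated by noncompact parts of the reduced automorphism group, along which $\sigma(t)$ could push $\psi$ arbitrarily far from $\omega$ and destroy the stability estimate.
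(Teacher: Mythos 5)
Your proposal follows essentially the same architecture as the paper: long-time existence and uniform bounds from Theorem \ref{long}, the modified Calabi energy as Lyapunov functional with $\frac{d}{dt}\widetilde{Ca}=-2\int_M\dot\psi L\dot\psi\,\omega_\psi^n$, a uniform spectral gap for the Lichnerowicz operator along the flow (which the paper proves by a compactness-contradiction argument in the style of Chen--Li--Wang), exponential decay of the energy, and a bootstrap to $C^k$ convergence via Sobolev embedding.

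The one place where you diverge is in how the kernel of the Lichnerowicz operator is handled. You propose an explicit gauge-fixing decomposition of $\psi(t)$ into a component drifting in the finite-dimensional moduli of nearby extremal metrics plus an orthogonal, exponentially decaying component, and you flag the compatibility of this decomposition with the flow as the main difficulty. The paper sidesteps this entirely: since $M$ admits an extremal metric, the modified Calabi--Futaki invariant $\tilde F(\tilde Y)=\int_M\theta_{\tilde Y}(\psi)(S-\underline S-\theta_{\tilde X}(\psi))\,\omega_\psi^n$ vanishes identically for every $\tilde Y\in h_0(M)$, so the velocity $\dot\psi=S_\psi-\underline S-\theta_X(\psi)$ is \emph{automatically} $L^2(\omega_{\psi(t)})$-orthogonal to all holomorphy potentials at every time slice, and the spectral gap applies to $\dot\psi$ directly with no projection and no tracking of drift. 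The limit $\psi_\infty$ is then supplied by the subsequential convergence already established in the corollary to Theorem \ref{long}, and the exponential rate is transferred to $\psi(t)-\psi_\infty$ by estimating $\frac{d}{dt}\int_M|\nabla^k(\psi(t)-\psi_\infty)|^2\omega^n$ against $\|S-\underline S-\theta_X\|_{L^2}$. Your route would also work, but it imports the heavier center-manifold-type bookkeeping that the Futaki-invariant identity is precisely designed to avoid; if you pursue your version you must actually prove that the moduli drift converges, which is an extra step the paper does not need.
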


\noindent {\bf Acknowledgements:}
The authors want to thank Professor X.X. Chen for many useful discussions on the stability problem of Calabi flow. They also want to express their gratitude to Professor V. Apostolov for his interest and help in this problem. The second author wants to thank his advisor Professor W.Y. Ding, also  
Professor X.H. Zhu and Professor H.Z. Li
for their help and encouragement. The first author would like to thank Professor P.F. Guan for valuable discussions.

\section{Long time existence}
First of all, we would like to give a rough estimate of the geodesic distance between any two K\"ahler potentials $\varphi_0, \varphi_1$ when
$$
\omega_{\varphi_0}, \omega_{\varphi_1} < \Lambda \omega.
$$

\begin{lem} 
\label{DE}
$$
d(\varphi_0, \varphi_1) < C(\Lambda) \left( \int_M |\varphi_0 - \varphi_1|^2 \omega^n \right)^{\frac{1}{2}}
$$
\end{lem}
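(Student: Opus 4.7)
The plan is to bound the Mabuchi geodesic distance from above by the length of a conveniently chosen smooth path joining $\varphi_0$ to $\varphi_1$, namely the linear interpolation $\varphi_t = (1-t)\varphi_0 + t\varphi_1$ for $t \in [0,1]$. Since $d(\varphi_0,\varphi_1)$ is defined as the infimum of lengths of piecewise smooth curves in $\mathcal{H}$ connecting the two points, any admissible path furnishes an upper bound, so there is no need to involve Chen's $C^{1,1}$ geodesic.

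First I would verify that this linear path actually lies in $\mathcal{H}$. Because $\sqrt{-1}\partial\bar\partial$ is linear in the potential, one has
$$
\omega_{\varphi_t} = (1-t)\omega_{\varphi_0} + t\,\omega_{\varphi_1},
$$
a convex combination of two positive $(1,1)$-forms and hence itself positive for every $t\in[0,1]$. The same identity combined with the hypothesis $\omega_{\varphi_0},\omega_{\varphi_1} < \Lambda\omega$ gives the uniform upper bound $\omega_{\varphi_t} < \Lambda\omega$, and therefore the pointwise inequality $\omega_{\varphi_t}^n < \Lambda^n\,\omega^n$ on $M$ for all $t\in[0,1]$.

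Next I would compute the Mabuchi length of this path. Its tangent vector is the constant function $\dot{\varphi}_t = \varphi_1 - \varphi_0$, so
$$
L(\varphi_t) = \int_0^1 \left( \int_M (\varphi_1-\varphi_0)^2\, \omega_{\varphi_t}^n \right)^{1/2} dt \;\leq\; \Lambda^{n/2}\left( \int_M (\varphi_1-\varphi_0)^2\, \omega^n \right)^{1/2}.
$$
Since $d(\varphi_0,\varphi_1) \leq L(\varphi_t)$, this yields the claim with the explicit constant $C(\Lambda) = \Lambda^{n/2}$.

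I do not anticipate a serious obstacle here: the argument only uses that the distance is an infimum, that the linear path stays in $\mathcal{H}$ by convexity of positivity, and that the uniform upper bound $\omega_{\varphi_t} < \Lambda\omega$ converts each slice's $L^2(\omega_{\varphi_t})$ norm into the target $L^2(\omega)$ norm with a $t$-independent constant. Notably the lemma does not need any lower bound on $\omega_{\varphi_0}$ or $\omega_{\varphi_1}$, which matches the fact that bounding distance from above is genuinely one-sided.
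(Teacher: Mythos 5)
Your proposal is correct and follows essentially the same route as the paper: bound the distance by the length of the linear interpolation path and use $\omega_{\varphi_t}<\Lambda\omega$ to compare the volume forms. You merely make explicit two points the paper leaves implicit, namely that convexity keeps the path in $\mathcal{H}$ and that the constant can be taken to be $\Lambda^{n/2}$.
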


\begin{proof}
Let $\varphi_t =(1-t)\varphi_0 + t \varphi_1$ for $0\leq t\leq1$.
Then
\begin{align*}
d(\varphi_0,\varphi_1)
&\leq L(\gamma_t)\\
&=\int_0^1 \left(\int_M \left(\frac{\partial\gamma_t}{\partial t} \right)^2
\omega^n_{\gamma_t} \right)^{\frac{1}{2}}dt\\
&\leq\int_0^1 \left(\int_M (\varphi_0 - \varphi_1)^2
\omega^n_{\gamma_t} \right)^{\frac{1}{2}}dt\\
&\leq C(\Lambda) \left( \int_M |\varphi_0 - \varphi_1|^2 \omega^n \right)^{\frac{1}{2}}.
\end{align*}

\end{proof}

We are ready to give a proof of \thmref{long}.
\begin{proof}
Suppose that the conclusion fails, then there exist positive constants $\mathcal{K}, \lambda, \Lambda$ and a sequence of
$\varphi^0_{s}$ such that
\[
|\varphi^0_s|_{C^{2,\alpha}} < \mathcal{K}, \ \lambda \omega < \omega_{\varphi_0^s} < \Lambda \omega, \ \int_M |\varphi^0_s|^2 \omega^n < \frac{1}{s} \quad s=1,2,3 \cdots .
\]
By virtue of the short time existence theorem,
we get a sequence of solutions $\varphi_s(t)$ satisfying the
flow equation \eqref{calabi flow}
with $\varphi_s(0)=\varphi^0_s$.
Let $T_s$ be the first time such that
$$|\varphi_s(T_s) - \rho(T_s) |_{c^{2,\alpha}(\omega(T_s))}=2C \quad \mbox{ holds}$$ 
or 
$$ \lambda \omega(T_s) < \omega_{\varphi_s(T_s)} < \Lambda \omega(T_s) \quad \mbox{ fails,}$$ 
where the constant $C$ is from \thmref{short time: stability}. Then $T_s$ is bounded from below for sufficiently large $s$. Otherwise there is a subsequence of $\varphi_s(T_s)$ converging to $\varphi_\infty$ in the $C^{2,\alpha'}(\omega)$ sense, where $\alpha'<\alpha$. Notice that $\lambda \omega \leq \omega_{\varphi_\infty} \leq \Lambda \omega $, but that $\lambda \omega < \omega_{\varphi_\infty} < \Lambda \omega $ fails. 

On the other hand, Lemma (\ref{DE}) shows that $d(0, \varphi_s^0) \rightarrow 0$ as $s \rightarrow \infty$. Since the distance function decreases under the Calabi flow, we have $$d(\rho(T_s), \varphi_s(T_s)) \rightarrow 0$$ as $s \rightarrow \infty$. Let $\varphi_\infty(t)$ be one parameter potentials satisfying the Calabi flow equation initiating from $\varphi_\infty$. Then for $t_0 \geq t$, 
\begin{eqnarray*}
d(\rho(t_0), \varphi_\infty(t_0)) &\leq& d(\rho(t), \varphi_\infty(t)) \\
&\leq& d(\rho(t), \rho(T_s)) + d(\rho(T_s),\varphi_s(T_s)) +  d(\varphi_s(T_s), \varphi_\infty(t)).
\end{eqnarray*}
By Lemma (\ref{DE}), $d(\varphi_s(T_s), \varphi_\infty(t)) \rightarrow 0$ as $s \rightarrow \infty$ and $t \rightarrow 0$. Hence $\rho(t_0) = \varphi_\infty(t_0)$, which implies $0 = \varphi_\infty$, a contradiction.

Moreover, from \thmref{regularity of PCF: reg of
PCF},
we obtain the higher order uniform bounds of the sequence of
the solutions:
\[
|\varphi_s(T_s) - \rho(T_s) |_{C^{k,\alpha}(\omega(T_s))} \leq C(k), \ \forall k\geq 0.
\]
Therefore we can choose a subsequence of $\phi_s=\sigma(-T_s)^*(\varphi_s(T_s) -\rho(T_s))$ so that
\[
\phi_s\rightarrow\phi_\infty\text{ in } C^{k,\alpha}(\omega),\forall k\geq 0,
\] and
\[
|\phi_\infty|_{C^{2,\alpha}(\omega)}=2C \quad \mbox{(or} \quad \lambda \omega < \omega_{\phi_\infty} < \Lambda \omega \ \mbox{fails).}
\]
However, this contradicts the fact that $d(0,\phi_\infty) = 0$.
\end{proof}

\begin{cor}
Given a K\"ahler potential $\varphi_0$ close to $0$ in the sense of \thmref{long}, then the modified Calabi flow stays in a neighborhood of $0$. If $\varphi_0$ is $\mathcal{K}$-invariant, then the modified Calabi flow converges to an extremal metric nearby. 
\end{cor}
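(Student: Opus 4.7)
My plan is to treat the two assertions of the corollary in turn.

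The first assertion (the modified Calabi flow stays in a neighborhood of $0$) is essentially a re-reading of the proof of \thmref{long}. In that argument, the escape time $T_s$ was defined as the first time that either $|\varphi(t)-\rho(t)|_{c^{2,\alpha}(\omega(t))}=2C$ or the metric bounds fail; the contradiction argument forces $T_s=\infty$ for sufficiently small initial data. Using the identity $\omega(t)=\sigma(t)^\ast\omega$, pulling back by $\sigma(-t)$ converts these bounds into uniform bounds for $\psi(t)=\sigma(-t)^\ast(\varphi(t)-\rho(t))$ with respect to the fixed background metric $\omega$, namely $|\psi(t)|_{c^{2,\alpha}(\omega)}<2C$ and $\lambda\omega<\omega_{\psi(t)}<\Lambda\omega$. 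Combined with \thmref{regularity of PCF: reg of PCF}, the modified Calabi flow remains in a fixed $C^{k,\alpha}(\omega)$-neighborhood of $0$ for every $k\ge 0$.

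For the convergence statement, I would first check that $\mathcal{K}$-invariance is preserved along the modified Calabi flow. This is because the extremal vector field $\tilde X$ lies in the center of the Lie algebra of $\mathcal{K}$ (Futaki--Mabuchi), so conjugation by $\sigma(t)$ commutes with the $\mathcal{K}$-action, and the evolution equation $\partial_t\psi=S_\psi-\underline S-\theta_X(\psi)$ only involves $\mathcal{K}$-invariant data when $\psi$ is $\mathcal{K}$-invariant. Next I would observe that the modified Calabi energy
\[
\widetilde{Ca}(\psi)=\int_M(S_\psi-\underline S-\theta_X(\psi))^2\omega_\psi^n
\]
is monotonically decreasing along the flow, with derivative controlled by an $L^2$ norm involving $\mathcal{D}_\psi(S_\psi-\underline S-\theta_X(\psi))$. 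Combined with the uniform smoothness from the first part, Arzela--Ascoli then produces subsequential smooth limits $\psi(t_k)\to\psi_\infty$, each a $\mathcal{K}$-invariant extremal potential close to $0$.

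Upgrading subsequential to honest convergence is the main obstacle. I would do this by linearizing the flow at the critical point $\omega$: the linearization is, up to first order terms, $-\mathcal{D}^\ast\mathcal{D}$, whose kernel on $\mathcal{K}$-invariant functions reduces to the constants. This is precisely where $\mathcal{K}$-invariance is crucial, for otherwise the kernel would also contain Hamiltonians of reduced holomorphic vector fields transverse to $\mathcal{K}$, and no spectral gap would be available. The resulting spectral gap, together with a Lojasiewicz--Simon type argument made accessible by the uniform smoothness that traps the flow in the basin of the linear regime, would then yield convergence to a nearby $\mathcal{K}$-invariant extremal metric (and in fact, exponentially fast, as in \thmref{exp}), with the twist by $\theta_X$ absorbed by the $\sigma(t)$-conjugation used to define the modified flow.
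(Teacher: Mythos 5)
Your first paragraph and your subsequential-limit argument match the paper: the trapping in a $C^{k,\alpha}$-neighborhood comes from the long-time existence argument plus Theorem~\ref{regularity of PCF: reg of PCF}, and the monotonicity of the (modified) Calabi energy together with Arzel\`a--Ascoli produces a smooth limit $\psi_\infty$ with $\int_M \mathcal{L}_{\psi_\infty}(S_{\psi_\infty})S_{\psi_\infty}\,\omega_{\psi_\infty}^n=0$, hence an extremal metric. Where you diverge is the upgrade from subsequential to full convergence. The paper's argument is much softer: since $\varphi_0$ is $\mathcal{K}$-invariant, the limit $\psi_\infty$ is a $\mathcal{K}$-invariant extremal potential whose extremal vector field must coincide with $X$ (Futaki--Mabuchi uniqueness relative to the fixed maximal compact), so $\psi_\infty$ is a \emph{fixed point} of the modified flow; the Calabi flow is distance-decreasing between any two solutions in the Mabuchi metric (Calabi--Chen), so $d(\psi(t),\psi_\infty)$ is monotone and tends to $0$ along $t_j$, hence along all of $t$. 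Your route instead invokes a spectral gap for the linearization plus a \L{}ojasiewicz--Simon argument. That is a legitimate and in some ways more robust strategy (it yields a rate and does not need the geodesic-distance machinery), but as written it carries two burdens the paper avoids: you must actually establish the \L{}ojasiewicz inequality for the modified K-energy near $\omega$ (a nontrivial Lyapunov--Schmidt reduction, not supplied by uniform smoothness alone), and your statement that the kernel of $\mathcal{D}^\ast\mathcal{D}$ on $\mathcal{K}$-invariant functions ``reduces to the constants'' is false for a genuinely extremal (non-cscK) metric: $\theta_X$ itself, and the holomorphy potentials of the center of $\mathrm{Lie}(\mathcal{K})$, are $\mathcal{K}$-invariant kernel elements. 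The correct statement, which is what the paper's Section~3 lemma proves, is that the gap holds on the orthogonal complement of \emph{all} holomorphy potentials, and the flow direction lies in that complement because the modified Calabi--Futaki invariant vanishes. With that correction your approach goes through, but it is essentially reproving Theorem~\ref{exp} rather than the shorter fixed-point-plus-distance-monotonicity argument the paper uses for this corollary.
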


\begin{proof}
That the modified Calabi flow stays in a neighborhood of $0$ can be easily seen from the regularity \thmref{regularity of PCF: reg of
PCF}. Notice that the Calabi flow decreases the Calabi energy, i.e.
$$
\frac{\partial}{\partial t} Ca(\omega_\varphi) = -2 \int_M \mathcal{L_\varphi}(S_\varphi) S_\varphi \ \omega_{\varphi}^n,
$$
where $\mathcal{L}_\varphi$ is the Lichnerowicz operator with respect to $\omega_\varphi$. It follows that we can take a sequence of $t_j \rightarrow \infty$ such that 
$$
\lim_{j \rightarrow \infty}\int_M \mathcal{L}_{\varphi(t_j)}(S_{\varphi(t_j)}) S_{\varphi(t_j)} \ \omega_{\varphi(t_j)}^n = 0.
$$
Then there is a subsequence of $t_j$ such that $\psi(t_j)$ converges to a potential 
$\psi_\infty$ in $C^{\infty}$ and
$$
\int_M \mathcal{L_{\psi_\infty}}(S_{\psi_\infty}) S_{\psi_\infty} \ \omega_{\psi_\infty}^n = 0.
$$
Hence $\omega_{\psi_\infty}$ is an extremal metric. If $\varphi_0$ is $\mathcal{K}$-invariant, then $\psi_\infty$ is a fixed point under the modified Calabi flow and the modified Calabi flow decreases the geodesic distance between $\psi(t)$ and $\psi_\infty$. Hence the flow converges to $\psi_\infty$.
\end{proof}

\section{Exponential decay}
We define the modified Calabi energy as
\begin{align*}
\widetilde{Ca}(\psi) = \int_M(S(\psi)-\underline{S}
-\theta_{X}(\psi))^2\omega_\psi^n.
\end{align*}
The evolution of the modified Calabi energy along the modified Calabi flow is
\begin{align*}
\partial_t\int_M\dot\psi^2\omega_\psi^n
&=\int_M(2\dot\psi\ddot\psi
+\dot\psi^2\triangle_\psi\dot\psi)\omega_\psi^n\\
&=2\int_M\dot\psi(\dot S_\psi-\dot\theta_X(t)
-\dot\psi_i\dot\psi^i)\omega_\psi^n\\
&=2\int_M\dot\psi(-L\dot\psi+\dot\psi^iS_i-\dot\theta_X(t)
-\dot\psi_i\dot\psi^i)\omega_\psi^n\\
&=2\int_M\dot\psi(-L\dot\psi+\dot\psi^i(\dot\psi_i+\theta_X(t)_i)
-\dot\theta_X(t)-\dot\psi_i\dot\psi^i)\omega_\psi^n\\
&=2\int_M\dot\psi(-L\dot\psi+\dot\psi^i\dot\psi_i+X(\dot\psi)
-X(\dot\psi)-\dot\psi_i\dot\psi^i)\omega_\psi^n\\
&=-2\int_M\dot\psi L\dot\psi\omega_\psi^n.
\end{align*}
In this computation we use the identities
$$
\dot\psi^i({\theta_X}_i+(X(\psi))_i)=\dot\psi^i(\theta_X(t))_i=X(\dot\psi).
$$

The modified Calabi-Futaki invariant
\begin{align}\label{futaki}
\tilde{F}(\tilde{Y})&=F(\tilde{Y})-B(\tilde{X},\tilde{Y})
=\int_M\theta_{\tilde{Y}}(\psi)
(S-\underline{S}-\theta_{\tilde{X}}(\psi))\omega_\psi^n
\end{align}
is equal to zero when $M$ admits an extremal metric $\omega$. This shows that the direction of the modified Calabi flow is always perpendicular to the kernel of the Lichnerowicz operator. To obtain exponential convergence, one needs to give a uniform lower bound of the the first eigenvalue of $L_t$ along the modified Calabi flow. 
More precisely, we have the following lemma which is similar to Chen-Li-Wang \cite{MR2481736}.
\begin{lem}
Along the modified Calabi flow, there is a positive constant $\lambda > 0$ such that for sufficiently large $t$ and for any
\begin{align*}
f\in A_t&=\{f\in C_R^\infty(M) \vert \int_Mf\omega^n_{\psi(t)}=0 \ \mbox{and} \ 
\int_M \theta_{Y}(t)f\omega_{\psi(t)}^n
=0,
\forall \tilde{Y}\in h_0(M)\},
\end{align*}
we have
$$
\int_M L_t(f) f \omega^n_{\psi(t)}
\geq\lambda\int_M f^2\omega^n_{\psi(t)}.
$$
\end{lem}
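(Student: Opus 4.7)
The plan is to argue by contradiction and use the compactness supplied by the corollary that the $\mathcal{K}$-invariant modified Calabi flow converges to an extremal potential. Suppose the conclusion fails: then there exist times $t_j \to \infty$ and functions $f_j \in A_{t_j}$, which we normalize so that $\|f_j\|_{L^2(\omega_{\psi(t_j)})} = 1$, with
\[
\int_M L_{t_j}(f_j)\, f_j\, \omega_{\psi(t_j)}^n \;\to\; 0.
\]
Under the standing hypothesis (the lemma will be applied in the $\mathcal{K}$-invariant setting), the corollary to Theorem \ref{long} implies $\psi(t) \to \psi_\infty$ in $C^\infty$ with $\omega_{\psi_\infty}$ extremal. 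In particular, $\omega_{\psi(t_j)} \to \omega_{\psi_\infty}$, $L_{t_j} \to L_{\psi_\infty}$, and $\theta_Y(\psi(t_j)) \to \theta_Y(\psi_\infty)$ smoothly for every $\tilde Y \in h_0(M)$.

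Next I would use the factorisation $\int L f\cdot f\, \omega^n = \int |\mathcal{D} f|^2\, \omega^n$ with $\mathcal{D} f = \bar\partial\nabla^{1,0} f$, combined with a G{\aa}rding-type coercivity estimate for the elliptic fourth-order operator $\mathcal{D}^*\mathcal{D}$, to bound $\|f_j\|_{H^2}$ uniformly (the constants are uniform because the metrics are uniformly equivalent to $\omega_{\psi_\infty}$). Rellich's theorem then yields a subsequence converging to some $f_\infty$ strongly in $L^2$ and weakly in $H^2$, with $\|f_\infty\|_{L^2(\omega_{\psi_\infty})} = 1$. Weak lower semicontinuity together with smooth convergence of the coefficients of $L_{t_j}$ gives
\[
\int_M L_{\psi_\infty}(f_\infty)\, f_\infty\, \omega_{\psi_\infty}^n \;\le\; \liminf_{j\to\infty}\int_M L_{t_j}(f_j)\, f_j\, \omega_{\psi(t_j)}^n \;=\; 0,
\]
so $f_\infty$ lies in the kernel of $L_{\psi_\infty}$.

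The hard part is identifying this kernel and extracting the contradiction. On the extremal metric $\omega_{\psi_\infty}$ the reduced automorphism group is reductive and the extremal vector field $\tilde X$ lies in the centre of $h_0(M)$, so $\ker L_{\psi_\infty}$ is spanned by the constants and the real holomorphy potentials $\theta_Y(\psi_\infty)$ with $\tilde Y \in h_0(M)$. Writing $f_\infty = c + \theta_{Y_\infty}(\psi_\infty)$ and passing the two defining conditions of $A_{t_j}$ to the limit (using the smooth convergence $\theta_Y(\psi(t_j))\to\theta_Y(\psi_\infty)$), one first gets $c\cdot\mathrm{vol} = 0$ and then
\[
B(\tilde Y,\tilde Y_\infty) \;=\; \int_M \theta_Y(\psi_\infty)\,\theta_{Y_\infty}(\psi_\infty)\,\omega_{\psi_\infty}^n \;=\; 0 \quad \text{for all } \tilde Y\in h_0(M).
\]
Taking $\tilde Y = \tilde Y_\infty$ and invoking the non-degeneracy of the Futaki--Mabuchi bilinear form $B$ on $h_0(M)$ forces $\tilde Y_\infty = 0$, hence $f_\infty \equiv 0$, which contradicts $\|f_\infty\|_{L^2} = 1$.

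The principal technical point to verify is the uniform G{\aa}rding inequality for $L_t$ along the flow; this is where the higher-order regularity from Theorem \ref{regularity of PCF: reg of PCF} is needed, since we must know the coefficients of $L_t$ stay in a fixed compact set of $C^\infty$. Everything else (smooth dependence of $\theta_Y(\psi)$ on $\psi$, the description of $\ker L$ on an extremal metric, non-degeneracy of $B$, and convergence of the flow) is already available from the preceding sections of the paper.
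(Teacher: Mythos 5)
Your proposal is correct and follows essentially the same route as the paper: a contradiction/compactness argument in which a normalized minimizing sequence converges in $W^{2,2}$ to a limit $f_\infty$ lying in the kernel of the Lichnerowicz operator, and the orthogonality conditions defining $A_t$ then force $f_\infty$ to vanish, contradicting the normalization. The only cosmetic differences are that the paper obtains the uniform $W^{2,2}$ bound via the Ricci identity plus interpolation rather than a G\aa rding inequality, and concludes by testing directly against $\tilde Y=\uparrow\bar\partial f_\infty$ rather than invoking non-degeneracy of the Futaki--Mabuchi form.
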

\begin{proof}
If not, there must be a sequence $\psi_s=\psi(s)$ and $f_s$ such that
\begin{align}\label{epd: assu}
\int_M|(f_s)_{ij}|^2\omega^n_{\psi_s}
<\frac{1}{s};
\int_Mf_s^2\omega^n_{\psi_s}=1;
\int_Mf_s\omega^n_{\psi_s}=0.
\end{align}
Since the $C^l$ norm of $\psi_s$ is uniformly bounded for any $l\geq0$.
Using the Ricci identity
$$\int_M|(f_s)_{i\bar j}|^2\omega_{\psi_s}^n
=\int_M|(f_s)_{ij}|^2\omega_{\psi_s}^n
+\int_MR^{i\bar{j}}(f_s)_i(f_s)_{\bar j}\omega_{\psi_s}^n$$
and the interpolation inequality,
we conclude that $f_s$ are uniformly $W^{2,2}$ bounded.
So we can pass to the limit and get
\begin{align}\label{epd: assu infty}
\int_M|(f_\infty)_{ij}|^2\omega^n_{\psi_\infty}=0;
\int_Mf_\infty^2\omega^n_{\psi_\infty}=1;
\int_Mf_\infty\omega^n_{\psi_\infty}=0.
\end{align}
Since in local coordinates, $\uparrow\bar\partial f_\infty$ is holomorphic in the weak sense, $f_\infty$ is smooth indeed.
From the assumption of $A_t$ we have
\begin{align*}
\int_M \theta_{Y}(\psi_\infty)f_\infty\omega_{\psi_\infty}^n
=0,
\forall \tilde{Y}\in h_0(M).
\end{align*}
In particular, we may choose
$\tilde{Y}=\uparrow\bar\partial f_\infty\in h_0(M)$.
Hence,
\begin{align*}
\int_Mf_\infty^2\omega_\infty^n=0.
\end{align*}
This contradicts \eqref{epd: assu infty}.
\end{proof}
It is easy to see that
$
\widetilde{Ca}(\psi(t))
\leq C e^{-\lambda t}
$. 
To get exponential convergence of $\psi(t)$, we calculate the evolution formula for $\int_M |\nabla^k (\psi(t)-\psi_\infty)|^2 \ \omega^n$ :
\begin{eqnarray*}
& & \frac{\partial}{\partial t} \int_M |\nabla^k (\psi(t)-\psi_\infty) |^2 \ \omega^n \\
&=& \int_M \nabla^k (S-\underline{S} -\theta_X(\psi)) * \nabla^k (\psi(t)-\psi_\infty) \ \omega^n\\
&=& \int_M (S-\underline{S} -\theta_X(\psi)) * \nabla^{2k} (\psi(t)-\psi_\infty) \ \omega^n \\
&\leq& \left(\int_M (S-\underline{S}-\theta(X))^2 \ \omega^n \right)^{1/2} \left(\int_M |\nabla^{2k} (\psi(t)-\psi_\infty)|^2 \ \omega^n\right)^{1/2}\\
&\leq& C \parallel S-\underline{S}-\theta(X) \parallel_{L^2(\omega)} \\
&\leq& C \parallel S-\underline{S}-\theta(X) \parallel_{L^2(\omega_t)} \\
&\leq& C e^{-\lambda_2 t}.
\end{eqnarray*}

By the Sobolev embedding, we conclude that

$$
\parallel \psi_t - \psi_{\infty} \parallel_{C^l(\omega)} \ \leq \ \parallel \psi_t - \psi_{\infty} \parallel_{W^{k,2}(\omega)} \ \leq \ C e^{-\lambda_2 t}.
$$

Hence we obtain the result stated in \thmref{exp}.

Hongnian Huang

Centre interuniversitaire de recherches en geometri et topologie

Universite du Quebec a Montr茅al

Case postale 8888, Succursale centre-ville

Montreal (Quebec), Canada

E-mail address: hnhuang@gmail.com

\vspace{0.2 in}
Kai Zheng

Academy of Mathematics and Systems Sciences 

Chinese Academy of Sciences

Bei- jing, 100190, P.R. China.

E-mail address: kaizheng@amss.ac.cn


\begin{thebibliography}{10}

\bibitem{MR1314584}
A. Futaki\ and\ T. Mabuchi, Bilinear forms and extremal K\"ahler vector fields associated with K\"ahler classes, Math. Ann. {\bf 301} (1995), no.~2, 199--210.

\bibitem{MR645743}
E. Calabi, Extremal K\"ahler metrics, in {\it Seminar on Differential Geometry}, 259--290, Ann. of Math. Stud., 102, Princeton Univ. Press, Princeton, N.J.

\bibitem{MR780039}
E. Calabi, Extremal K\"ahler metrics. II, in {\it Differential geometry and complex analysis}, 95--114, Springer, Berlin.

\bibitem{MR1969662}
E. Calabi\ and\ X. X. Chen, The space of K\"ahler metrics. II, J. Differential Geom. {\bf 61} (2002), no.~2, 173--193.

\bibitem{MR1863016}
X. X. Chen, The space of K\"ahler metrics, J. Differential Geom. {\bf 56} (2000), no.~2, 189--234.

\bibitem{MR1820328}
X. X. Chen, Calabi flow in Riemann surfaces revisited: a new point of view, Internat. Math. Res. Notices {\bf 2001}, no.~6, 275--297.

\bibitem{zheng-2009}
X. X. Chen, W. Y. Ding\ and\ K. Zheng, Pseudo-Calabi flow. Unpublished, 2009.

\bibitem{MR2405167}
X. X. Chen\ and\ W. Y. He, On the Calabi flow, Amer. J. Math. {\bf 130} (2008), no.~2, 539--570.

\bibitem{chen-2007}
X. X. Chen\ and\ W. Y. He,
The Calabi flow on K\"ahler surface with bounded Sobolev constant--({I}), arXiv:0710.5159, 2007.

\bibitem{chen-2008}
X. X. Chen\ and\ W. Y. He,
The Calabi flow on toric Fano surface, arXiv:0807.3984, 2008.

\bibitem{MR2481736}
X. X. Chen, H. Z. Li\ and\ B. Wang, K\"ahler-Ricci flow with small initial energy. Geom. Funct. Anal. 18 (2009), no. 5, 1525--1563.

\bibitem{MR1817370}
X. X. Chen\ and\ G. Tian, Ricci flow on K\"ahler manifolds, C. R. Acad. Sci. Paris S\'er. I Math. {\bf 332} (2001), no.~3, 245--248.

\bibitem{MR1893004}
X. X. Chen\ and\ G. Tian, Ricci flow on K\"ahler-Einstein surfaces, Invent. Math. {\bf 147} (2002), no.~3, 487--544.

\bibitem{MR2121892}
X. X. Chen\ and\ G. Tian, Uniqueness of extremal K\"ahler metrics, C. R. Math. Acad. Sci. Paris {\bf 340} (2005), no.~4, 287--290.

\bibitem{zheng-2009krf}
X. X. Chen\ and\ K. Zheng, K\"ahler {R}icci flow in the space of {K}\"aler metric. Unpublished, 2009.

\bibitem{chenzhu}
X. X. Chen\ and\ M. J. Zhu,
Liouville energy on a topological two sphere, arXiv:0710.4320, 2007.

\bibitem{MR1101689}
P. T. Chru\'sciel, Semi-global existence and convergence of solutions of the Robinson-Trautman ($2$-dimensional Calabi) equation, Comm. Math. Phys. {\bf 137} (1991), no.~2, 289--313.

\bibitem{MR1736211}
S. K. Donaldson, Symmetric spaces, K\"ahler geometry and Hamiltonian dynamics, in {\it Northern California Symplectic Geometry Seminar}, 13--33, Amer. Math. Soc. Transl. Ser. 2, 196, Amer. Math. Soc., Providence, RI.

\bibitem{MR2103718}
S. K. Donaldson, Conjectures in K\"ahler geometry, in {\it Strings and geometry}, 71--78, Amer. Math. Soc., Providence, RI.

\bibitem{he-2009}
W. Y. He, Local solution and extension to the Calabi flow, arXiv:0904.0978, 2009.

\bibitem{MR909015}
T. Mabuchi, Some symplectic geometry on compact K\"ahler manifolds. I, Osaka J. Math. {\bf 24} (1987), no.~2, 227--252.

\bibitem{MR1165352}
S. Semmes, Complex Monge-Amp\`ere and symplectic manifolds, Amer. J. Math. {\bf 114} (1992), no.~3, 495--550.

\bibitem{MR1991140}
M. Struwe. Curvature flows on surfaces. Ann. Sc. Norm. Super. Pisa Cl. Sci. (5) 1 (2002), no. 2, 247--274.

\bibitem{MR2357473}
V. Tosatti\ and\ B. Weinkove, The Calabi flow with small initial energy, Math. Res. Lett. {\bf 14} (2007), no.~6, 1033--1039.


\end{thebibliography}
\end{document}